\newcommand{\excise}[1]{}
\newtheorem{thm}{Theorem}[section]
\newtheorem{lemma}[thm]{Lemma}
\newtheorem{cor}[thm]{Corollary}
\newtheorem{rem}[thm]{Remark}
\newtheorem{Warn}[thm]{Caution}
\def\emp{\nothing}
\def\sq{\square}
\def\nn{\mathbb N}
\def\cc{\mathbb C}
\def\fq{{\mathbb F}_q}
\def\ov{\overline}
\def\sm{\smallsetminus}
\def\Ga{\Gamma}
\def\la{\lambda}
\def\de{\delta}
\def\al{\alpha}
\def\vp{\varphi}
\def\cB{{\mathcal {B}}}
\def\cA{{\mathcal {A}}}
\def\cH{{\mathcal {H}}}
\def\cP{\mathcal P}
\def\cQ{\mathcal Q}
\def\ssu{\subset}
\def\<{\langle}
\def\>{\rangle}
\def\GL{ {\text {\rm GL} } }
\def\0{{\mathbf 0}}
\def\nothing{\varnothing}
\def\.{\hskip.06cm}
\def\ts{\hskip.03cm}
\def\lra{\leftrightarrow}
\def\corn{{{\text{\rm $v$}}}}
\title[Unimodality via Kronecker products]{Unimodality via Kronecker products}
\author[Igor~Pak]{ \ Igor~Pak$^\star$}
\author[Greta~Panova]{ \ Greta~Panova$^\star$}
\date{\today}
\thanks{\thinspace ${\hspace{-.45ex}}^\star$Department of Mathematics, UCLA, Los Angeles, CA 90095, \ts
\texttt{\{pak,panova\}@math.ucla.edu}}
\begin{document}
\maketitle

\begin{abstract}
We present new proofs and generalizations of unimodality of the
$q$-binomial coefficients~$\binom{n}{k}_q$ as polynomials in~$q$.
We use an algebraic approach by interpreting the differences between numbers
of certain partitions as Kronecker coefficients of representations of~$S_n$.
Other applications of this approach include strict unimodality of the diagonal
$q$-binomial coefficients and unimodality of certain partition statistics.
\end{abstract}

%
%


\section{Introduction}\label{sec:intro}

\noindent
A sequence $(a_1,a_2,\ldots,a_n)$ is called \emph{unimodal}, if for some~$k$ we have
$$
a_1 \, \le \, a_2 \, \le \,  \ldots \, \le \,  a_k \, \ge \,  a_{k+1}
\, \ge \, \ldots \, \ge \, a_n\ts.
$$
The study of unimodality of combinatorial sequences is a classical subject
going back to Newton, and has intensified in recent decades.  There is
a remarkable diversity of applicable tools, ranging from analytic to topological,
and from representation theory to probabilistic analysis.  The results have
a number of application, but are also important in their own right.  We refer
to~\cite{B1,B2,Sta-unim} for a broad overview of the subject.

\smallskip

In this paper we present two extensions of the following classical
unimodality result.  The \emph{$q$-binomial} (Gaussian) \emph{coefficients}
are defined as:
$$
\binom{m+\ell}{m}_q
\, = \ \. \frac{(q^{m+1}-1)\. \cdots\. (q^{m+\ell}-1)}{(q-1)\.\cdots\. (q^{\ell}-1)}
\ \. = \, \, \sum_{n=0}^{\ell\ts m} \, \. p_n(\ell,m) \. q^n\ts.
$$
The unimodality of a sequence
$$p_0(\ell,m)\ts, \, p_1(\ell,m)\ts, \, \ldots \,, \, p_{\ell\ts m}(\ell,m)
$$
is a celebrated result first conjectured by Cayley in~1856,
and proved by Sylvester in~1878~\cite{Syl} (see also~\cite{Sta-Lie}).
Historically, it has been a starting point of many investigations
and various generalizations, both of combinatorial and algebraic nature,
and the problem remains very difficult.  We refer to Section~\ref{s:fin} 
for discussion of various proofs, connections with the Sperner property, 
historical remarks and references.  

\smallskip

Recall that $p_n(\ell,m) = \# \cP_n(\ell,m)$, where
$\cP_n(\ell,m)$ is the set of partitions $\la\vdash n$, such that
$\la_1 \le m$ and~$\la'_1\le \ell$.  Denote by~$\corn(\la)$
the number of distinct part sizes in the partition~$\la$.
The sequence $(a_1,\ldots,a_n)$ is called \emph{symmetric}~if $a_i = a_{n+1-i}$,
for all $i\le i \le n$.


\begin{thm}\label{t:qbin}
Let
$$
p_n(\ell,m,r) \. = \. \sum_{\la \in \cP_n(\ell,m)} \. \binom{\corn(\la)}{r}\ts.
$$
Then the sequence
$$p_r(\ell,m,r), \ts p_{r+1}(\ell,m,r), \ldots, p_{\ell\ts m}(\ell,m,r)
$$
is symmetric and unimodal.
\end{thm}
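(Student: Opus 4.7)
\emph{Proposal.} The plan is to handle symmetry and unimodality separately. For symmetry, I would use a direct combinatorial bijection: interpret $\binom{\corn(\la)}{r}$ as the number of $r$-subsets $S$ of the inner corners of $\la$, so that $p_n(\ell,m,r)$ counts pairs $(\la,S)$ with $\la\in\cP_n(\ell,m)$ and $|S|=r$. Given such a pair, first delete the cells of $S$ to obtain $\la'\in\cP_{n-r}(\ell,m)$; the cells of $S$ are then precisely $r$ outer corners of $\la'$ lying inside the $\ell\times m$ box. Then pass to the box-complement $\mu=(\la')^c\in\cP_{\ell m-n+r}(\ell,m)$: under $180^\circ$ rotation of the rectangle, outer corners of $\la'$ inside the box correspond bijectively to inner corners of $\mu$. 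The resulting pair $(\mu,T)$ is of the same form and size $\ell m-n+r$, the construction is manifestly invertible, and we conclude $p_n(\ell,m,r)=p_{\ell m-n+r}(\ell,m,r)$.

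\emph{Unimodality via Kronecker.} For unimodality, I would follow the algebraic framework announced in the abstract and try to realize
$$
p_n(\ell,m,r)-p_{n-1}(\ell,m,r) \,=\, g\bigl((N-n,\,n),\,\mu,\,\nu\bigr)
$$
as a Kronecker coefficient of $S_N$-irreducibles, for partitions $\mu,\nu$ of some $N$ built from the data $(\ell,m,r)$. The guiding template in the baseline case $r=0$ is an identity of the shape $g((\ell m-n,n),(m^\ell),(m^\ell)) = p_n(\ell',m')-p_{n-1}(\ell',m')$ for suitable $\ell',m'$, which reproves classical unimodality of the $q$-binomial. For $r>0$, the weight $\binom{\corn(\la)}{r}$ must be absorbed into the representation-theoretic setup, the natural candidates being perturbations of the rectangle $(m^\ell)$ by a size-$r$ hook, or tensoring with a hook character such as $\chi^{(1^r)}$ or $\chi^{(r)}$. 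Non-negativity of Kronecker coefficients then gives $p_n\ge p_{n-1}$ on the ascending half $n\le(\ell m+r)/2$, and the symmetry above propagates monotonicity to the whole range.

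\emph{Main obstacle.} The crux of the argument is the algebraic identification itself: pinning down the precise $\mu,\nu$ (or a manifestly non-negative combination of Kronecker coefficients) whose two-row Kronecker coefficients reproduce the differences $p_n(\ell,m,r)-p_{n-1}(\ell,m,r)$, and verifying that the $r$-twist corresponds to exactly the binomial factor $\binom{\corn(\la)}{r}$ rather than some spurious statistic. Once the correct representation-theoretic object is located, the remainder should reduce to standard two-row character computations via Schur--Weyl duality and the Murnaghan--Nakayama rule, making the non-negativity of the Kronecker coefficient the essential representation-theoretic input.
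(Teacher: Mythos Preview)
Your overall approach is the paper's. The symmetry bijection you describe is exactly what the paper records in its final remarks, and your unimodality strategy---realize $p_n-p_{n-1}$ as a two-row Kronecker coefficient for suitable perturbations of the rectangle---is precisely the paper's framework.

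The gap you flag is resolved as follows. The key lemma is general: for any $\lambda,\mu\vdash N$, setting $a_k(\lambda,\mu)=\sum_{\alpha\vdash k,\,\beta\vdash N-k} c^\lambda_{\alpha\beta}\,c^\mu_{\alpha\beta}$, Littlewood's identity $s_\lambda*(s_\pi s_\theta)=\sum c^\lambda_{\alpha\beta}(s_\alpha*s_\pi)(s_\beta*s_\theta)$ together with Jacobi--Trudi $s_{(N-k,k)}=h_kh_{N-k}-h_{k-1}h_{N-k+1}$ gives $g\bigl(\lambda,\mu,(N-k,k)\bigr)=a_k(\lambda,\mu)-a_{k-1}(\lambda,\mu)\ge 0$; neither Murnaghan--Nakayama nor Schur--Weyl enters. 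The specific shapes are $N=\ell m+r$ with $\lambda=(m^\ell,1^r)$ and $\mu=(m+r,m^{\ell-1})$: the rectangle with a column of length $r$ attached below, and the \emph{same} rectangle with a row of length $r$ attached to the right---two different perturbations used simultaneously, not a single shape and not an extra tensor factor. An elementary LR analysis then shows $c^\lambda_{\alpha\beta}\,c^\mu_{\alpha\beta}\in\{0,1\}$, with value $1$ exactly when $\alpha\subset(m^\ell)$ and $\beta$ is the complement $\bar\alpha$ plus $r$ boxes forming simultaneously a vertical strip (forced by $\lambda$) and a horizontal strip (forced by $\mu$), i.e.\ $r$ distinct corners of $\alpha$. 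Hence $a_n(\lambda,\mu)=p_n(\ell,m,r)$, and the lemma yields both unimodality and symmetry at once.
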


\smallskip

Note that  $p_n(\ell,m,r)=0$ for $n<\binom{r+1}{2}$ or $n>\ell\ts m -\binom{r}{2}$,
and that~$\corn(\la)$
can be viewed as the \emph{number of corners} of the corresponding
Young diagram~$[\la]$.
Moreover, $p_n(\ell,m,0) = p_n(\ell,m)$ and  therefore, for $r=0$,
Theorem~\ref{t:qbin} gives the unimodality of $q$-binomial coefficients.
Our next theorem is a different extension of this result in the diagonal case.

\smallskip

\begin{thm}\label{t:strict}
Let $a_n = p_n(m,m)$.  Then, for all $\.m\ge 7\ts$, we have:
$$
a_{1}\. < \. a_{2}\. < \. \ldots \. < \. a_{\lfloor m^2/2\rfloor}
\. = \. a_{\lceil m^2/2\rceil} \. > \. \ldots \. > a_{m^2-2}\. > \. a_{m^2-1}\ts.
$$
\end{thm}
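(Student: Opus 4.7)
The plan is to bootstrap off the main Kronecker-coefficient machinery developed in this paper. By the time one reaches Theorem~\ref{t:strict}, one expects to have available an identity of the form
$$
p_n(m,m) \. - \. p_{n-1}(m,m) \, = \, g\bigl(\la^{(n)}\!,\, \mu^{(n)}\!,\, \nu^{(n)}\bigr),
$$
expressing each successive difference as a Kronecker coefficient of the symmetric group, for some explicit family of triples of partitions depending on $m$ and $n$. Non-negativity of $g$ gives weak unimodality of $\binom{2m}{m}_q$, recovering Sylvester's theorem in the diagonal case. The content of Theorem~\ref{t:strict} thus reduces to the purely representation-theoretic statement that $g(\la^{(n)}, \mu^{(n)}, \nu^{(n)}) > 0$ for every $n$ in the range $2 \le n \le \lfloor m^2/2 \rfloor$; the right half of the inequalities comes for free from the palindromic symmetry $a_n = a_{m^2 - n}$ of the $q$-binomial coefficient.

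To prove strict positivity, I would invoke the semigroup property of Kronecker coefficients: the set of triples $(\al, \be, \ga)$ with $g(\al, \be, \ga) > 0$ is closed under coordinatewise addition of partitions. The strategy is then (i) to establish a short list of \emph{anchor} triples whose positivity can be verified directly, and (ii) to write each target triple $(\la^{(n)}, \mu^{(n)}, \nu^{(n)})$ as a coordinatewise sum of anchors. For step (i), the natural candidates involve shapes for which the Kronecker product has a manageable combinatorial description --- hooks, two-row partitions, or rectangles --- where positivity can be extracted by exhibiting a single explicit constituent of $S^\al \otimes S^\be$, reducing to a short Littlewood--Richardson-type check.

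The principal obstacle is step (i): Kronecker coefficients admit no general combinatorial rule, so the anchors must be chosen carefully and tailored to the geometry of the $m \times m$ rectangle, likely via rectangular or staircase building blocks that mesh with the combinatorial construction of $\la^{(n)}, \mu^{(n)}, \nu^{(n)}$. The threshold $m \ge 7$ in the statement plausibly reflects precisely how much room the anchor-plus-semigroup argument requires to get off the ground; the finitely many exceptional cases $m \le 6$, for which $\binom{2m}{m}_q$ has at most $37$ coefficients, can be checked by direct computation, completing the proof uniformly for all $m \ge 7$.
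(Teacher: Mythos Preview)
Your first step matches the paper's: the Main Lemma specialized to $\lambda = \mu = (m^m)$ gives $p_k(m,m) - p_{k-1}(m,m) = g\bigl((m^m),(m^m),(m^2-k,k)\bigr)$, reducing the theorem to strict positivity of these Kronecker coefficients for $2 \le k \le m^2/2$. From there the paper diverges from your plan. Rather than the semigroup property, it first dispatches $k \le m$ elementarily (since then $p_k(m,m)$ is the unrestricted partition number $\pi(k)$, and $\pi(k)-\pi(k-1)>0$ for $k\ge 2$ counts partitions with no part equal to~$1$), and for $m < k \le m^2/2$ it invokes a lemma from~\cite{PPV} asserting $g\bigl((m^m),(m^m),\tau\bigr) \ge 1$ whenever the character value $\chi^\tau[2m-1,\ldots,3,1] \ne 0$. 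That character value equals $a_k - a_{k-1}$ for $\cA_m(q) = \prod_{i=1}^m(1+q^{2i-1})$, and the paper proves (Theorem~\ref{t:prod}, a sharpening of Almkvist's theorem via an explicit injection on partitions into distinct odd parts) that this difference is positive for $27 \le k < m^2/2$; the remaining cases $7 \le m \le 26$ are then checked by direct computation. Your semigroup route is a legitimate alternative---indeed it is close in spirit to the method of the companion paper~\cite{PP} (see~\S\ref{ss:fin-post}), which extends the result off the diagonal---but as written it is only a plan: you have not specified the anchor triples or shown that every target $\bigl((m^m),(m^m),(m^2-k,k)\bigr)$ decomposes as a sum of anchors, and that is where the real work lies. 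One small correction: $m \ge 7$ is not a proof artifact but the true range of validity, as strict unimodality genuinely fails for $m = 3,4,6$.
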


\smallskip
Of course, the new contributions of this theorem are the strict inequalities
(see also Subsection~\ref{ss:fin-post}).
The idea behind the proof of Theorem~\ref{t:qbin} is to consider tensor
products $S^{\lambda}\otimes S^{\nu}$
of irreducible representations of~$S_n$, where $\nu=(n-k,k)$ is a two-row
partition.  We study the \emph{Kronecker coefficients}
$g(\la,\mu,\nu)$, defined as the multiplicity of $S^{\nu}$ in the tensor product representation $S^{\lambda}\otimes S^{\nu}$, namely
\begin{equation}\label{def:kron}
S^{\ts\lambda}\otimes S^{\ts\mu} \, = \, \scalebox{1.3}{$\underset{\nu\vdash n}{\oplus} $}\,\, g(\la,\mu,\nu) \. S^{\ts\nu}\ts
\end{equation}
and interpret these coefficients combinatorially, as the difference in the number of
certain \emph{Littlewood--Richardson} (LR) \emph{tableaux}.  We then prove that
these tableaux are in bijection with the desired partitions.
The inequality $g(\la,\mu,\nu)\ge 0$ then implies unimodality.

The proof of Theorem~\ref{t:strict} is more intricate and uses further ingredients.
We employ the main lemma in~\cite{PPV}  to show that $g(\la,\mu,\nu) >0$ and thereby to reduce strict positivity of
Kronecker coefficients to \emph{strict unimodality} of sufficiently large
coefficients of a polynomial
$$
\cA_m(q) \, = \, \prod_{i=1}^m \. \bigl(1+q^{2\ts i-1}\bigr), \qquad \text{for all \ $m\ge~27$\ts.}
$$
To prove this result (Theorem \ref{t:prod}), we strengthen Almkvist's proof of (non-strict) unimodality
of $\cA_m(q)+q+q^{m^2-1}$, see~\cite{A1}.

\smallskip

The paper is structured as follows. We start with definitions and notations
in Section~\ref{s:def}.  We then present the Main Lemma on
unimodality of certain products of LR~coefficients (Section~\ref{s:main}).
In sections~\ref{s:app} and~\ref{s:strict}, we apply the Main Lemma to derive
all theorem~\ref{t:qbin} and~\ref{t:strict}, respectively.  In the following
Section~\ref{s:dual}, we present a dual version of the Main Lemma and derive
algebraically a weak version of Almkvist's theorem.  We conclude with final
remarks and open problems in Section~\ref{s:fin}.

\medskip

\section{Definitions, notation and examples}\label{s:def}

\noindent
We refer the reader to~\cite{Mac,Sta} for the background on symmetric
functions and combinatorics of Young tableaux. Here we set the notations,
recall the LR~rule, and include an example of Theorem~\ref{t:qbin}.

\smallskip

\subsection{Partitions and Young diagrams}\label{ss:def-part}
For any integer partition $\pi=(\pi_1,\ldots,\pi_k)$ let $\pi'$ denote its~\emph{conjugate},
i.e.~the partition whose Young diagram~$[\pi']$ is the transpose of the Young diagram of $\pi$,
or algebraically $\pi'_i = \# \{j: \pi_j\geq i\}$. Let $(a^b) = (a,\ldots,a)$, $b$~times,
denote the partition whose shape is a $b\times a$ rectangle.
Assuming there is a fixed rectangle $(a^b)$ in the context, we denote by $\bar{\pi}$
the \emph{complement} of $\pi$ within this rectangle, i.e. $\bar{\pi}_i = a-\pi_{b+1-i}$.
For example, if $\pi=(5,5,3,2)$, then $\pi'=(4,4,3,2,2)$, the complement of $\pi$ within
the $(6^4)$ rectangle is $\bar{\pi} = (4,3,1,1)$ (we assume that $\pi_j=0$ for $j>k$).

\smallskip

\subsection{Symmetric functions and the Kronecker product} \label{ss:def-sym}
Following~\cite{Mac,Sta}, we use $e_k$ and $h_k$ to denote elementary and
homogeneous symmetric functions, respectively, and let $s_\la$ be the Schur functions.
We use ``$*$'' to denote the \emph{Kronecker product} in the ring
of symmetric functions, so
$$s_\la \ts * \ts s_\mu \, = \, \sum_{\nu\vdash n} \. g(\la,\mu,\nu) \. s_\nu\..
$$
Here $g(\la,\mu,\nu)$ are the Kronecker coefficients as defined by \eqref{def:kron} in \S\ref{sec:intro}. Unlike the Littlewood-Richardson coefficients, explained in $\S$\ref{ss:def-LR}, which have many nice properties like an easy combinatorial interpretation, no such properties are present for the Kronecker coefficients in the general case. The best current result along these lines is the combinatorial interpretation of Blasiak~\cite{Bla} in the case when one of the partitions $\la,\mu,\nu$ is a hook. 

\smallskip

\subsection{The LR rule}\label{ss:def-LR}
The LR coefficients $c^{\lambda}_{\mu\nu}$ are originally defined as the multiplicity
of the irreducible representation $V_{\lambda}$ of $\GL(N,\cc)$ within the tensor product
$V_{\mu}\otimes V_{\nu}$. For our purposes we will recall their original combinatorial
interpretation in terms of \emph{semi-standard Young tableaux} (SSYT).

The \emph{reading word} of a semi-standard Young tableaux $T$ is the sequence obtained
by successively recording the numbers appearing in $T$ starting from the top row to
the bottom row and reading each row from right to left. A \emph{lattice permutation}
(ballot sequence) is a sequence of positive integers $w=w_1w_2\ldots w_n$,
such that, for every $k$ and $i$, among the first $k$ letters of $w$ there are at
least as many $i$'s as $(i+1)$'s, or formally
$$
\#\{j: w_j=i, j\leq k\} \, \geq  \, \. \#\{ j: w_j=i+1,j\leq k\} \qquad \text{for all} \quad 1\le k \le n\ts, \ i\ge 1\ts.
$$
We say that a sequence or a tableau is of \emph{type}~$\beta$ if it has $\beta_i$ numbers
equal to~$i$.

The \emph{Littlewood--Richardson rule} states that $c^{\lambda}_{\mu\nu}$
is equal to the number of SSYT's of shape~$\lambda/\mu$, of type~$\nu$, and
whose reading word is a lattice permutation. We call such tableaux the
\emph{Littlewood--Richardson} (LR) \emph{tableaux}.

For example, if $\lambda=(5,5,3,2)$, $\mu=(2,1)$ and $\nu =(4,4,3,1)$, then the semi-standard
tableau~$X$ below
is an LR~tableau of shape $\lambda/\mu$, type~$\nu$, and whose reading word is $111222133243$.
$$\ytableausetup{centertableaux}
X \ = \ \ \ytableaushort{\none\none 111,\none 1222,233,34}
$$

\smallskip

\subsection{Partitions in a rectangle} \label{ss:def-rect}
Let $\ell=m=3$.  Then $\cP_n=\cP_n(3,3)$ are as follows (here, for brevity and aesthetics, we use a concise notation for the partitions, e.g. instead of $(3,2,2)$ we write $32^2$):
$$
\cP_0=\emp, \
\cP_1=\{1\}, \
\cP_2=\{2,1^2\}, \
\cP_3=\{3,21,1^3\}, \  \cP_4=\{31,22,21^2\},
$$
$$
\cP_5=\{32,221,31^2\}, \,
\cP_6=\{3^2,321,2^3\}, \,
\cP_7=\{3^21,32^2\}, \,
\cP_8=\{3^22\}, \,
\cP_9=\{3^3\}\ts.
$$
Therefore,
$$
\binom{6}{3}_q \, = \, \sum_n \. p_n(3,3) \. q^n \, =
\, 1+ \ts q+2\ts q^2+\ts 3 \ts q^3+\ts 3\ts q^4
+\ts 3\ts q^5+\ts 3\ts q^6+\ts 2\ts q^7+\ts q^8+\ts q^9
$$
and
$$\sum_n \. p_n(3,3,1) \. q^n \, = \, q \. + \. 2\ts q^2 \. + \.
4\ts q^3 \. + \. 5\ts q^4 \. + \. 6\ts q^5 \. + \. 5\ts q^6 \. + \. 4\ts q^7 \. +
\. 2\ts q^8 \. + \. q^9.
$$
Note that even the symmetry of the last polynomial is not obvious.
For example, term $\ts 2\ts q^2\ts$ comes from two partitions each with one corner,
while $\ts 2\ts q^8\ts$ comes from one partition with
two corners (cf.~$\S$\ref{ss:fin-sym}).

\medskip

\section{Main Lemma}\label{s:main}

\noindent
For every two partitions, $\la$ and $\mu$, of size~$n$, define
$$
a_k(\lambda,\mu) \, = \,
\sum_{\alpha \vdash k, \. \beta\vdash n-k} \, c^{\lambda}_{\alpha\ts\beta}\. c^{\mu}_{\alpha\ts\beta}\.,
$$
where $\. c^{\nu}_{\pi\theta}\.$ are the \emph{Littlewood--Richardson coefficients}.  

\smallskip

\begin{lemma}[Main Lemma]\label{t:main}
For any two partitions $\lambda,\mu\vdash n$, the  sequence
$$a_0(\lambda,\mu),\ldots, a_n(\lambda,\mu)$$ is symmetric and unimodal.
\end{lemma}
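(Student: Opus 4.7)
The plan is to identify $a_k(\lambda,\mu)$ as an inner product pairing the Kronecker product $s_\lambda * s_\mu$ against the complete homogeneous symmetric function $h_k h_{n-k}$, and then extract a single Kronecker coefficient from consecutive differences via the two-row Jacobi--Trudi identity.

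First I would establish the symmetry $a_k(\lambda,\mu) = a_{n-k}(\lambda,\mu)$. This is immediate from the identity $c^{\lambda}_{\alpha\beta} = c^{\lambda}_{\beta\alpha}$ (coming from the isomorphism $S_k \times S_{n-k} \cong S_{n-k}\times S_k$ and the symmetry of the Littlewood--Richardson rule): swapping the roles of $\alpha$ and $\beta$ in the defining sum transforms the index set for $a_k$ into that for $a_{n-k}$.

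Next I would prove the key identity
\[
a_k(\lambda,\mu) \; = \; \bigl\langle \ts s_\lambda * s_\mu \ts , \. h_k\ts h_{n-k}\bigr\rangle\ts.
\]
This follows by computing the multiplicity of the trivial representation in $(\chi^\lambda \otimes \chi^\mu)\downarrow^{S_n}_{S_k\times S_{n-k}}$ two ways. Using the branching rule $\chi^\lambda \downarrow = \sum_{\alpha,\beta} c^\lambda_{\alpha\beta}\ts \chi^\alpha\boxtimes \chi^\beta$, the restriction $(\chi^\lambda\otimes\chi^\mu)\downarrow$ is the tensor product of two such sums, and the projection onto the trivial character of $S_k\times S_{n-k}$ picks out exactly $\sum_{\alpha,\beta} c^\lambda_{\alpha\beta}\ts c^\mu_{\alpha\beta}$ by orthogonality of irreducible characters on each factor. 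On the other hand, Frobenius reciprocity equates this with $\langle \chi^\lambda \otimes \chi^\mu, \mathrm{Ind}^{S_n}_{S_k\times S_{n-k}}(\mathbf{1})\rangle_{S_n}$, and under the Frobenius characteristic map this becomes $\langle s_\lambda*s_\mu,\ts h_k h_{n-k}\rangle$.

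With this identity in hand, I would apply the two-row Jacobi--Trudi formula $s_{(n-k,k)} = h_{n-k} h_k - h_{n-k+1} h_{k-1}$, valid for $0\le k\le \lfloor n/2\rfloor$, to compute
\[
a_k(\lambda,\mu) \. - \. a_{k-1}(\lambda,\mu) \; = \; \bigl\langle s_\lambda*s_\mu,\ts s_{(n-k,k)}\bigr\rangle \; = \; g\bigl(\lambda,\mu,(n-k,k)\bigr)\ts.
\]
Since $g(\lambda,\mu,\nu)\ge 0$ as a multiplicity of an irreducible in a genuine representation, this yields $a_0\le a_1\le \cdots \le a_{\lfloor n/2\rfloor}$; combined with the symmetry proved first, unimodality follows.

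The main obstacle is really just the bookkeeping in verifying the identity $a_k(\lambda,\mu)=\langle s_\lambda*s_\mu,\ts h_k h_{n-k}\rangle$; once this is in place, the rest is a one-line application of Jacobi--Trudi and positivity of Kronecker coefficients. Note that this approach also implicitly gives the promised combinatorial reformulation mentioned in Section~\ref{sec:intro}: expanding each LR coefficient as a count of LR tableaux exhibits $g(\lambda,\mu,(n-k,k))$ as a signed difference of cardinalities of two explicit tableau sets, setting the stage for the applications in Sections~\ref{s:app} and~\ref{s:strict}.
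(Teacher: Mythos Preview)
Your proof is correct and follows essentially the same approach as the paper: both show $a_k(\lambda,\mu)-a_{k-1}(\lambda,\mu)=g\bigl(\lambda,\mu,(n-k,k)\bigr)$ via the two-row Jacobi--Trudi identity after identifying $a_k(\lambda,\mu)$ with the pairing $\langle s_\lambda * s_\mu,\, h_k h_{n-k}\rangle$. The only cosmetic difference is that the paper obtains this identification by invoking Littlewood's identity $s_\lambda*(s_\pi s_\theta)=\sum c^{\lambda}_{\alpha\beta}(s_\alpha*s_\pi)(s_\beta*s_\theta)$ with $\pi=(k)$, $\theta=(n-k)$, whereas you derive the needed special case directly from Frobenius reciprocity and the branching rule---which is precisely how Littlewood's identity is proved in the first place.
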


We refer the reader to $\S$\ref{ss:ernesto} for additional references on this result. 


\begin{proof}
We start with \emph{Littlewood's identity}:
$$
(\circ) \qquad \quad s_{\lambda}*(s_{\pi}\ts s_{\theta}) \. = \.
\sum_{\alpha\vdash k\ts, \.\beta \vdash n-k } \.
c^{\lambda}_{\alpha\ts\beta}(s_{\alpha}*s_{\pi})(s_{\beta}*s_{\theta})\.,
$$
where $\la \vdash n$, $\pi\vdash k$ and $\theta\vdash n-k$ (see~\cite{Lit}).

If $a$ is a positive integer, then $s_{(a)}$ corresponds to the trivial representation. So we have \ts
$s_{\nu}*s_{(a)}=s_{\nu}$\ts, for all $\nu\vdash a$.
For $\pi=(k)$ and $\theta=(n-k)$, we obtain:
$$
s_{\lambda}\ts *\ts (s_{(k)}\ts s_{(n-k)}) \. = \.
\sum_{\alpha \vdash k, \. \beta \vdash n-k} \. c^{\lambda}_{\alpha\ts \beta}  \.
s_{\alpha}\ts s_{\beta} \. =\.
\sum_{\alpha\vdash k, \. \beta \vdash n-k, \. \nu \vdash n} \.
c^{\lambda}_{\alpha\ts\beta} \. c^{\nu}_{\alpha\ts\beta}\. s_{\nu}\ts.
$$
Now let $\tau=(n-k,k)$, where $k\leq n/2$.  By the Jacobi--Trudi formula,
we have:
$$
s_{\tau} \. = \. s_k \ts s_{n-k} \. - \. s_{k-1} \ts s_{n-k+1}\..
$$
We obtain:
$$s_{\lambda}*s_{\tau} \. = \.
s_{\lambda}\ts * \ts (s_k\ts s_{n-k}) \. - \. s_{\lambda}\ts * \ts (s_{k-1}\ts s_{n-k+1}) \. = \.
\sum_{\nu\vdash n} \.(\. a_k(\lambda,\nu)\ts s_{\nu} \. - \. a_{k-1}(\lambda,\nu)\ts s_{\nu}\.)\ts.
$$
Therefore, the Kronecker coefficient \ts $g(\lambda,\mu,\tau)$ \ts, which is equal to
the coefficient at $s_{\mu}$ in the expansion of \ts $s_{\lambda}*s_{\tau}$
in terms of Schur functions, is given by:
$$
g(\lambda,\mu,\tau) \. = \. a_k(\lambda,\mu) \. - \. a_{k-1}(\lambda,\mu)\ts.
$$
Since $g(\lambda,\mu,\tau)\geq 0$, the unimodality follows.  The symmetry
is clear from the definition and the symmetry of the LR coefficients, i.e. the fact that $c^{\tau}_{\al\beta}=c^{\tau}_{\beta\al}$ for any $\tau,\al,\beta$.
\end{proof}

\bigskip

\section{Special cases of the Main Lemma} \label{s:app}

\noindent
We begin with a few special cases which are obtained as corollaries to the Main Lemma when the
LR~coefficients are either~$0$ or~$1$.  We present them in
increasing order of complexity.  This is done to simplify and streamline
the exposition.

\subsection{$q$-binomial coefficients} \label{ss:app-qbin}
 We first obtain the special case $r=0$ in
Theorem~\ref{t:qbin}.  In other words, we prove unimodality of the
coefficients of $q^n$ in $\binom{m+\ell}{m}_q$.  See Section~\ref{s:fin}
for other generalizations, and~$\S$\ref{ss:app-dist} below for another approach.

\begin{cor}\label{c:qbin}
Let $p_n(\ell,m)$ be the number of partitions of $n$ which fit in the
$\ell\times m$ rectangle. Then the sequence \.$p_0(\ell,m),\ldots,p_{\ell m}(\ell,m)\.$
is symmetric and unimodal.
\end{cor}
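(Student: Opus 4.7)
The plan is to deduce Corollary~\ref{c:qbin} from Lemma~\ref{t:main} (the Main Lemma) by specializing to the self-paired rectangular case $\lambda=\mu=(m^\ell)$, in which case the ambient $n$ of the Main Lemma equals $\ell m$. The goal is then to identify the resulting sequence $a_k\bigl((m^\ell),(m^\ell)\bigr)$, for $0\le k\le \ell m$, with the sequence $p_k(\ell,m)$; symmetry and unimodality follow immediately from the Main Lemma applied to this pair.

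The whole identification rests on the classical $0/1$ formula for rectangular LR~coefficients:
$$
c^{(m^\ell)}_{\alpha\beta} \. = \. \begin{cases} 1, & \text{if } \alpha\subseteq(m^\ell) \text{ and } \beta=\bar\alpha, \\ 0, & \text{otherwise,} \end{cases}
$$
where $\bar\alpha$ is the complement of $\alpha$ in the $\ell\times m$ rectangle, as in \S\ref{ss:def-part}. Granted this, the double sum defining $a_k$ collapses to a single diagonal term for each $\alpha\vdash k$ contained in $(m^\ell)$, and that term equals $1$:
$$
a_k\bigl((m^\ell),(m^\ell)\bigr) \. = \. \sum_{\alpha\vdash k,\,\beta\vdash \ell m-k} \bigl(c^{(m^\ell)}_{\alpha\beta}\bigr)^{\ts 2} \. = \. \#\bigl\{\alpha\vdash k \,:\, \alpha\subseteq(m^\ell)\bigr\} \. = \. p_k(\ell,m).
$$

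The only substantive step is the verification of the $0/1$ formula, and I would handle it by direct application of the LR rule to the skew shape $(m^\ell)/\alpha$. In this skew shape the cells of column $j$ form the interval of rows $\alpha'_j+1,\ldots,\ell$; combining semistandardness (strict increase down columns) with the lattice-permutation condition on the reading word forces the unique filling in which the cell $(i,j)$ carries the entry $i-\alpha'_j$. Reading off its type gives exactly $\bar\alpha$, so $c^{(m^\ell)}_{\alpha\beta}=0$ unless $\beta=\bar\alpha$, in which case it equals $1$.

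I do not anticipate any real obstacle here: the corollary is essentially the simplest possible specialization of the Main Lemma, the only content being the recognition that for the rectangular pair $\lambda=\mu=(m^\ell)$ the LR~coefficients degenerate to the indicator of the complement involution $\alpha\leftrightarrow\bar\alpha$, so that $a_k$ counts precisely the partitions of $k$ inside the rectangle.
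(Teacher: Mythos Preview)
Your proposal is correct and follows essentially the same approach as the paper: specialize the Main Lemma to $\lambda=\mu=(m^\ell)$ and use the standard fact that $c^{(m^\ell)}_{\alpha\beta}$ is~$1$ if $\beta=\bar\alpha$ and~$0$ otherwise, so that $a_k\bigl((m^\ell),(m^\ell)\bigr)=p_k(\ell,m)$. The only cosmetic difference is that you verify the $0/1$ formula column by column, while the paper does it row by row.
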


\begin{proof}
Let $\lambda=\mu=(m^\ell)$.  Recall that
$c^{(m^\ell)}_{\alpha\beta}=1$ if $\beta$ is the complementary
partition of $\alpha$ within the $(m^\ell)$ rectangle, and is~$0$ otherwise. This can be
seen combinatorially as follows.  The SSYT and lattice permutation property enforce that
the first~$i$ rows of any skew LR~tableau contains only the first~$i$ numbers. Since
the rows in $(m^\ell)/\alpha$ are right-justified, filling them from top to bottom and
right to left, we see by induction that the rightmost numbers in row $i$ must be equal
to~$i$, and while the SSYT property forces them to be at least as many as the $(i-1)$'s
above, the lattice permutation property requires them to be exactly as many, and hence
sitting straight below the $(i-1)$'s. Continuing this way, the SSYT property enforces
at least as many $(i-1)$'s in the $i$-th row as $(i-2)$'s above them, and the lattice
permutation enforces them to be equally many, etc. This way we get a unique tableau,
as in the example below, where $m=6$, $\ell=4$ and $\al=(4,3,1)$.

\begin{center} \ytableaushort{\none\none\none\none 11,\none\none\none 122,\none 11233,122344} \end{center}

Therefore, for any $\alpha \subset (m^\ell)$, there is a unique $\beta$ giving a nonzero
LR~coefficient.  This coefficient is equal to~1, so
$$
a_n((m^\ell),(m^\ell)) \, = \, \sum_{\alpha \vdash n, \, \alpha \subset (m^\ell)}\. 1 \, = \. p_n(\ell,m)\ts.
$$
Now Lemma~\ref{t:main} implies the result.
\end{proof}

\subsection{Proof of Theorem~\ref{t:qbin}} \label{ss:app-proof}
We proceed as in the case of the $q$-binomial coefficients. We choose shapes $\lambda$
and $\mu$ such that the LR~coefficients $c^{\la}_{\al\beta}$ and $c^{\mu}_{\al\beta}$  equal~1 exactly when $\beta$
differs from the complement of $\alpha$ within $(m^\ell)$ by $r$ corners, and otherwise at least one of them is 0.

Let $\lambda = (m^\ell,1^r)$ and $\mu = (m+r,m^{\ell-1})$, i.e.~a rectangle with a column
of length $r$ attached below and the same rectangle with a row of length $r$ attached on
its right. In order for both $c^{\lambda}_{\alpha\beta}$ and $c^{\mu}_{\alpha\beta}$
to be nonzero we must have $\alpha,\beta \subset \lambda\cap \mu=(m^\ell)$.

To compute $c^{\lambda}_{\alpha\beta}$, note that the first~$\ell$ rows of LR~tableaux
in $\lambda/\alpha$ are uniquely determined, by the same argument as in the proof of
Corollary~\ref{c:qbin}. The number of $i$'s in the first $\ell$ rows of the LR~tableaux
$\lambda/\alpha$ is $m-\alpha_{\ell+1-i}=\bar{\alpha}_i$, where $\bar{\alpha}$ is
the complement of $\alpha$ within $(m^\ell)$.

The remaining $r$ rows in $\lambda$ must be filled with~$r$ distinct numbers to
preserve the SSYT property. Let these numbers be $i_1,\ldots,i_r$.
The lattice permutation property is preserved up to row $(\ell +j)$
if and only if $1+\bar{\alpha}_{i_j} \leq \bar{\alpha}_{i_j-1}$ if
$i_{j-1} \neq i_j-1$ and $j>1$, and $1+\bar{\alpha}_{i_j} \leq 1 + \bar{\alpha}_{i_j-1}$
otherwise. 
The type $\beta$ of the tableau should satisfy $\beta_i =\bar{\alpha}_i$ if $i \neq i_1,\ldots,i_r$,
and $\beta_i=\bar{\alpha}_i+1$ otherwise. This is equivalent to saying that
the type~$\beta$ of the LR~tableaux is obtained from $\bar{\alpha}$ by adding a
vertical strip of length~$r$ to it. As long as $\beta \subset (m^\ell)$,
we have $c^{\lambda}_{\alpha\beta}=1$ in this case. For all other $\beta$, we have $c^{\la}_{\al\beta}=0$.

$$\ytableausetup{centertableaux}
Y \ = \ \
\ytableaushort{\none\none\none 111, \none 11222, \none 22333,133444,{i_1},{i_2}}
$$

\medskip

\noindent
For example, for the LR~tableau~$Y$ in the figure above, we have
\ts $\alpha=(3,1,1)$, $m=6,\ell=4$, $r=2$,
and the reading word of~$Y$ is $1112221133322444331\ts i_1\ts i_2$.
In order for it to be a lattice permutation,
we can have $i_1=2$ and $i_2=4$ or $i_1=2$ and $i_2=3$, so $\beta=(6,6,5,4)$ or $\beta=(6,6,6,3)$
and while $\bar{\alpha}=(6,5,5,3)$ the vertical strip added to $\beta$
consists of a box in row 2 and~4 in the first case, or in rows 2 and 3 in the second case.

Now let $\mu=(m+r,m^{\ell-1})$. It is well known and easy to see that for
any $\mu,\alpha$ and~$\beta$,  we have
$\. c^{\mu}_{\alpha\beta}=c^{\mu'}_{\alpha'\beta'}\.$ (see e.g.~\cite{HS}).
Note that $\mu'=(\ell^m,1^r)$ has shape similar to $\lambda$,
a rectangle plus a column at the bottom.
The same argument as above applies and gives that $\beta'=\overline{\alpha'}$,
where now $\overline{\alpha'}$ is the complement of $\alpha'$ within $(\ell^m)$,
plus a vertical strip of size~$r$.  Note, however, that $\overline{\alpha'}$
is the conjugate of $\bar{\alpha}$, so applying the argument above we conclude
that~$\beta'$ is $\bar{\alpha}'$ plus a vertical strip of size~$r$.  Conjugating again,
this means that $\beta$ is $\bar{\alpha}$ plus a horizontal strip of size $r$.

 It follows that in order for both $c^{\lambda}_{\alpha\beta}\neq 0$ and
 $c^{\mu}_{\alpha\beta}\neq 0$ to hold, $\beta$ should be $\bar{\alpha}$
 plus a horizontal strip of size~$r$, and at the same time $\bar{\alpha}$
 plus a vertical strip of size $r$. This is possible if and only if the strips added are individual
squares at distinct rows and columns. In other words, $\beta$ is obtained from $\bar{\alpha}$
by adding $r$ distinct corners of $\alpha$ and for each such~$\beta$ the LR~coefficients are~$1$.
Thus, fixing $\alpha$ and summing over all possible partitions~$\beta$, we have
$$
\sum_{\beta} \. c^{\lambda}_{\alpha\beta}\ts c^{\mu}_{\alpha\beta} \. = \.
\binom{\corn(\alpha)}{r}\ts,
$$
the number of ways to select $r$ distinct corners of~$\alpha$.
Now Lemma~\ref{t:main} with $\lambda = (m^\ell,1^r)$ and $\mu = (m+r,m^{\ell-1})$ implies the result. \ $\sq$

\subsection{Partitions into distinct parts}\label{ss:app-dist}
Here we present yet another proof of Corollary~\ref{c:qbin}, which
we state in a different, but equivalent form
(see Remark~\ref{rem:dist} below).
The details of the proof are different, however.

\begin{cor}\label{c:dist}
Let $m>\ell$, and let $\. d_n(\ell,m) \.$ be the number of partitions of~$n$
into~$\ell$ distinct parts~$\le m$.  Then the sequence
$$d_\ell(\ell,m)\ts, \, d_{\ell+1}(\ell,m)\ts, \, \ldots \, , \, d_{mn}(\ell,m)
$$
is symmetric and unimodal.
\end{cor}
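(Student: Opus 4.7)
The plan is to apply the Main Lemma with $\la = \mu = ((m-\ell)^\ell)$, i.e.\ the $\ell\times(m-\ell)$ rectangle (note $m-\ell > 0$ by hypothesis). By exactly the same LR analysis carried out in the proof of Corollary~\ref{c:qbin}, for this rectangular shape one has $c^{\la}_{\al\beta}=1$ when $\beta$ is the complement of $\al$ inside $((m-\ell)^\ell)$ and $0$ otherwise, so
$$
a_k\bigl(((m-\ell)^\ell),((m-\ell)^\ell)\bigr) \, = \, p_k(\ell, m-\ell)\ts,
$$
and the Main Lemma gives that this sequence is symmetric and unimodal in $k\in\{0,1,\ldots,\ell(m-\ell)\}$.

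Next, I would set up the classical staircase bijection between partitions of $n$ into $\ell$ distinct parts $\le m$ and partitions of $n-\binom{\ell+1}{2}$ fitting inside the $\ell\times(m-\ell)$ rectangle: given $\pi_1 > \pi_2 > \cdots > \pi_\ell \ge 1$ with $\pi_1\le m$, set
$$
\nu_i \, = \, \pi_i \. - \. (\ell - i + 1)\ts, \qquad i = 1, \ldots, \ell\ts.
$$
Because consecutive $\pi_i$ differ by at least one, $\nu$ is a weakly decreasing sequence of non-negative integers with $\nu_1 \le m-\ell$, and conversely any partition in $\ell\times(m-\ell)$ yields a strict partition by adding $(\ell,\ell-1,\ldots,1)$. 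The bijection shifts the total sum by $\binom{\ell+1}{2}$, so
$$
d_n(\ell,m) \, = \, p_{n-\binom{\ell+1}{2}}(\ell,m-\ell)\ts.
$$

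Combining these two ingredients, the Main Lemma applied to the smaller rectangle $((m-\ell)^\ell)$ shows at once that the sequence $d_{\binom{\ell+1}{2}}(\ell,m),\ldots,d_{\ell m - \binom{\ell}{2}}(\ell,m)$ is symmetric and unimodal, and extending by zeros gives the claim on the full stated range. Note this yields a \emph{different} proof of Corollary~\ref{c:qbin}: rather than applying the Main Lemma directly to the $\ell\times m$ rectangle and counting all partitions it contains, we apply it to the $\ell\times(m-\ell)$ rectangle and translate the result through the staircase bijection.

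The main (modest) obstacle is essentially bookkeeping: verifying that the endpoints $\binom{\ell+1}{2}$ and $\ell m - \binom{\ell}{2}$ correctly delimit the nonzero range so that padding with zeros preserves unimodality, and confirming that the symmetry center $\ell(m+1)/2$ agrees with the Main Lemma's symmetry center $\ell(m-\ell)/2$ after the shift by $\binom{\ell+1}{2}$. No further LR computation beyond that done for Corollary~\ref{c:qbin} is required.
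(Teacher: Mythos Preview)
Your argument is correct, but it takes a genuinely different route from the paper's proof. The paper applies the Main Lemma with the \emph{non-rectangular} shapes $\lambda=(m^\ell,\ell)$ and $\mu=(m+1)^\ell$, and carries out a fresh LR analysis showing that $c^{\lambda}_{\alpha\beta}\ts c^{\mu}_{\alpha\beta}$ is nonzero (and then equal to~$1$) precisely when $\alpha$ has $\ell$ distinct nonzero parts~$\le m$; thus $a_k(\lambda,\mu)=d_k(\ell,m)$ directly, with no bijection needed. You instead apply the Main Lemma to the smaller rectangle $((m-\ell)^\ell)$, which is literally Corollary~\ref{c:qbin} with parameters $(\ell,m-\ell)$, and then invoke the staircase bijection $\pi\leftrightarrow\pi-(\ell,\ell-1,\ldots,1)$ to translate $p_{\,\cdot\,}(\ell,m-\ell)$ into $d_{\,\cdot\,}(\ell,m)$.

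What each buys: your approach is shorter and requires no new LR computation, but it is exactly the equivalence the paper records in Remark~\ref{rem:dist}; it reduces Corollary~\ref{c:dist} to Corollary~\ref{c:qbin} rather than giving an independent derivation. The paper's point in this subsection is to exhibit a \emph{second} instantiation of the Main Lemma, with a different pair $(\lambda,\mu)$ and a different LR argument, so as to illustrate the method's flexibility. One small correction to your closing remark: your argument does not yield a ``different proof of Corollary~\ref{c:qbin}''; it \emph{uses} Corollary~\ref{c:qbin} (for the smaller rectangle) as its only substantive input, so it is a reduction, not an alternative proof.
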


\begin{proof}
Let $\lambda = (m^\ell,\ell)$ and $\mu = (m+1)^\ell$. In order to have both
LR~coefficients
$c^{\lambda}_{\alpha\beta}\neq 0$ and $c^{\mu}_{\alpha\beta}\neq 0$,
the rectangular shape $\mu$ forces $\beta$ to be the complementary
of~$\alpha$ within~$\mu$, denoted~$\bar{\alpha}$. Then,
$\beta_i = m+1 -\alpha_{\ell+1-i}$, $1\leq i \leq \ell$. In this case $c^{\mu}_{\alpha\beta}=1$.
Moreover, for both LR~coefficients to be nonzero, we must have
$\alpha \subset \lambda \cap \mu = (m^\ell)$.

To compute $c^{\lambda}_{\alpha\beta}$, we construct an LR~tableau of
shape $\lambda/\alpha$ and type $\beta$. As in the previous arguments,
the first $\ell$ rows in $\lambda/\alpha$ are uniquely determined.
It is easy to see that, for $i\leq \ell$, row~$i$ of this LR~tableau has
$\alpha_{i-r}-\alpha_{i-r+1}$ numbers equal to $r$ for $r=1,\ldots,i$,
where we set $\alpha_0=m$.
Hence, in the first $\ell$ rows we have a total $m-\alpha_{\ell+1-r}$
numbers equal to~$r$. As established in the previous paragraph,
since the  LR~tableau of shape  $\lambda/\alpha$must have type~$\beta$, it follows that the numbers  $m+1-\alpha_{\ell+1-r}$ are equal to~$r$.

$$
\ytableaushort{\none\none\none\none 11,\none\none 1122,\none 12233,{*(gray)1}{*(gray)2}{*(gray)3}}
$$

\smallskip

Thus the last, $(\ell+1)$-st row of $\lambda/\alpha$ (shaded in the figure above),
must be exactly $1,2,\ldots,\ell$.
In order to preserve the SSYT property the number in row $\ell$ and column~$r$ must
be less than~$r$, which is equivalent to $\alpha_{\ell-r+1}\geq r$ for each~$r$.
In order for the final reading word to be a ballot sequence, the part of the tableaux that lies in $(m^\ell)/\alpha$
must have strictly more $r$'s than $(r+1)$'s, for $r=1,\ldots,\ell-1$, which is equivalent to
$\beta_r-1>\beta_{r+1}-1$,\ i.e.  that $\alpha$ has distinct parts.
Finally, note that together with $\alpha_i >\ell-i$, these constraints are equivalent to
$\alpha$ having $\ell$ nonzero distinct parts.  Now Lemma~\ref{t:main} implies
the result.  \end{proof}

\begin{rem}\label{rem:dist}
{\rm  Corollaries~\ref{c:dist} and~\ref{c:qbin} are
in fact equivalent, as can be shown by a natural bijection \.
$\nu \lra \al + (\ell,\ell-1,\ldots,1)$\ts.  We omit the easy details. }
\end{rem}

\bigskip

\section{Strict unimodality} \label{s:strict}

\subsection{The result}
Consider a symmetric sequence $(a_1,a_2,\ldots,a_n)$.
We say that it is \emph{strictly unimodal}, if
$$
\aligned
a_1 \, < \, a_2 \, < \, \ldots \, < \, & a_k \,  = \, a_{k+1} \, >
\, \ldots \, > \, a_n\., \qquad \text{for} \ \, n=2\ts k \\
a_1 \, < \, a_2 \, < \, \ldots \, < \, & a_k \, > \, a_{k+1} \, >
\, \ldots \, > \, a_n\., \qquad \text{for} \ \, n=2\ts k-1  
\endaligned
$$
(cf.~\cite{Med}). Strict unimodality of various partition functions
was used in~\cite[$\S$6]{PPV} to establish strict positivity of
Kronecker coefficients in a similar context.\footnote{In fact, this
paper grew out of our efforts to extend~\cite{PPV}.}  Of course, the
Main Lemma (Lemma~\ref{t:main}) does not imply strict unimodality.

In this section, we apply methods in~\cite{PPV} and reverse the
logic of the Main Lemma to obtain Theorem~\ref{t:strict} strict
unimodality of the
\emph{diagonal $q$-binomial coefficients}:
$$
\binom{2\ts m}{m}_q \, = \,\sum_{n=0}^{m^2} \. p_n(m,m) \ts q^n
$$

\begin{rem}{\rm  A direct computation shows that
strict unimodality easily fails for~$m=3$,~$4$ and~$6$
(see e.g.~\ref{ss:def-part}), but holds for $m=2$ and~$5$.
This implies that the bound $m\ge 7$ in Theorem~\ref{t:strict} is tight.
}
\end{rem}

\subsection{Partitions into distinct odd parts}
We start with the following extension of Almkvist's theorem.

\begin{thm}  \label{t:prod}
Consider the following product
$$
\cA_m(q) \, = \, \prod_{i=1}^m \. \bigl(1+q^{2\ts i-1}\bigr) \, = \, \sum_{n=0}^{m^2} \, a_n \ts q^n\ts.
$$
Then, for all \ts $m\ge 27$, the sequence $(a_{26},\ldots,a_{m^2-26})$ is symmetric and strictly unimodal.
\end{thm}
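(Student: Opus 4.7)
Let $a_n=[q^n]\cA_m(q)$, so $a_n$ counts partitions of $n$ into distinct odd parts drawn from $\{1,3,\ldots,2m-1\}$. The involution sending such a partition $\lambda$ to its complement $\{1,3,\ldots,2m-1\}\sm\lambda$ yields the symmetry $a_n=a_{m^2-n}$, so it suffices to prove the strict inequalities $a_n<a_{n+1}$ for $26\le n\le \lfloor m^2/2\rfloor-1$. The plan is to build on Almkvist's argument \cite{A1}, which already establishes the weak inequalities $a_n\le a_{n+1}$ on this range (modulo the boundary corrections $q+q^{m^2-1}$ that lie outside our window), and to upgrade each weak inequality to a strict one.

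The main structural tool is the classical identity
$$
\cA_m(q) \. = \. \sum_{k=0}^m q^{k^2}\ts\binom{m}{k}_{\mts q^2},
$$
expressing $\cA_m$ as a sum of shifted $q$-binomial coefficients, each symmetric and unimodal by Sylvester's theorem. Crucially, the summand $q^{k^2}\binom{m}{k}_{q^2}$ has support only in exponents of parity $k\bmod 2$, so even- and odd-indexed coefficients of $\cA_m$ are assembled from disjoint subsets of summands. Hence $a_{n+1}-a_n$ is not a difference of adjacent coefficients within a single unimodal piece but a comparison \emph{across parity classes} of the decomposition.

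I would proceed in three steps. \textbf{(Step 1, strict contribution.)} For each $n$ in the target range, locate a single index $k_0$ of the correct parity (matching $n+1\bmod 2$) such that $q^{k_0^2}\binom{m}{k_0}_{q^2}$ contributes strictly more to $a_{n+1}$ than it does to $a_{n-1}$; equivalently, the relevant two coefficients of $\binom{m}{k_0}_{q^2}$ differ strictly. Strict unimodality of $\binom{m}{k_0}_{q^2}$ in its middle range, following \cite{PPV} (and the very tools developed earlier in this paper), supplies such a $k_0$ provided $k_0(m-k_0)$ is sufficiently large. \textbf{(Step 2, weak contributions.)} For the remaining indices, invoke Almkvist's non-strict monotonicity to conclude that each other summand contributes non-negatively to $a_{n+1}-a_n$. \textbf{(Step 3, combination.)} Sum the strict contribution from $k_0$ with the weak contributions from the other $k$'s to conclude $a_{n+1}-a_n>0$.

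The main obstacle is Step~1: finding a strictly contributing $k_0$ \emph{uniformly} over the range, particularly near the lower endpoint $n\approx 26$, where $k_0^2\le n\le 2k_0m-k_0^2$ leaves only a few admissible $k_0$, and near the center $n\approx \lfloor m^2/2\rfloor$, where one must verify that at least one summand has not yet reached its peak. The threshold $n\ge 26$ is essentially tight: a direct calculation shows that $a_{25}=a_{26}$ for all sufficiently large $m$ (the partitions of $25$ and $26$ into distinct odd parts match in count), so no strict increase can be obtained at the immediate predecessor. Small base cases $m\in\{27,\ldots,M_0\}$ for some explicit $M_0$ would be verified by direct computation, while the range $m>M_0$ is handled inductively via the recurrence $\cA_m(q)=(1+q^{2m-1})\cA_{m-1}(q)$, which propagates both Almkvist's weak unimodality and the strict contribution identified in Step~1.
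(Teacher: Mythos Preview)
Your plan has a structural gap more serious than the obstacle you flag in Step~1. The decomposition $\cA_m(q)=\sum_k q^{k^2}\binom{m}{k}_{q^2}$ splits the coefficients of $\cA_m$ by parity: the summand indexed by $k$ contributes only to those $a_n$ with $n\equiv k\pmod 2$. Hence for a fixed $k$, its contribution to $a_{n+1}-a_n$ is either $+[q^{n+1}]\bigl(q^{k^2}\binom{m}{k}_{q^2}\bigr)$ (if $k\equiv n+1$) or $-[q^{n}]\bigl(q^{k^2}\binom{m}{k}_{q^2}\bigr)$ (if $k\equiv n$); in the second case it is nonpositive, not nonnegative. So Step~2 fails as stated: unimodality of a single summand says nothing about a \emph{cross-parity} difference, and ``Almkvist's non-strict monotonicity'' applies to the full $\cA_m$, not to the individual pieces of your decomposition. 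Your Step~1, as written, compares the $k_0$-contribution at $n+1$ with that at $n-1$; this would establish same-parity monotonicity of $(a_n)$ (which indeed follows from Sylvester), not the inequality $a_{n+1}>a_n$ you need. Finally, invoking strict unimodality of $q$-binomials via \cite{PPV} or ``the tools developed earlier in this paper'' is circular here: Theorem~\ref{t:strict} is \emph{derived from} the present Theorem~\ref{t:prod}.

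The paper avoids the parity issue entirely by splitting the range at $n=2m$. For $n\ge 2m+1$, Almkvist's analytic estimates in \cite[p.~122]{A1} already give the \emph{strict} inequality $a_n<a_{n+1}$ (not merely the weak one). For $26\le n\le 2m-1$, one observes that $a_n=q(n)$, the number of partitions of $n$ into distinct odd parts with \emph{no} upper bound on parts, and proves $q(n)<q(n+1)$ by an explicit injection $\vp:\cQ_n\to\cQ_{n+1}$ (append a part~$1$; if $1$ is already present, delete it and increase the largest part by~$2$) together with an explicit partition of $n+1$ outside the image, supplied by a case analysis modulo~$4$. The borderline case $n=2m$ is handled by exhibiting a second such missing partition to absorb the correction $a_{2m+1}=q(2m+1)-1$.
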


\begin{proof} Fix $m \ge 27$. The symmetry is clear.
It suffices to show that
$$
a_n \. < \. a_{n+1} \qquad \text{for all} \quad \ 26\. \le \. n \. < \. \frac{m^2-1}{2}\ts.
$$
We consider three special cases of~$n$.  First, for $n\ge 2m+1$, this was
shown in~\cite[p.~122]{A1}.

Denote by $\cQ_n$ the set of partitions of $n$ into distinct odd parts, and let
$q(n) = |\cQ_n|$.  Observe that for $n \le 2m$, we have $a_n= q(n)$.  We define
an injection $\vp: \cQ_n \to \cQ_{n+1}$ as follows.
For $\nu = (\nu_1,\ldots,\nu_\ell) \in \cQ_n$, $n\ge 3$,  let
$$
\vp(\nu) \, =
\left\{ \aligned
& (\nu_1,\ldots,\nu_\ell,1) \hskip1.05cm  \text{if} \ \ \,  \nu_\ell>1\ts, \\
& (\nu_1+2,\nu_2,\ldots,\nu_{\ell-1}) \quad \text{if} \ \ \, \nu_\ell=1\ts.
\endaligned\right.
$$
This shows that $q(n+1)\ge q(n)$.  Moreover, we have
$\nu \in \cQ_{n+1}\sm \vp(\cQ_n)$ for all partitions, s.t. $\nu_1-\nu_2=2$ and the last part is at least 3, i.e. of the form
$\nu = (2i+1,2i-1,\ldots,j)\vdash n+1$, $j\ge 3$.  For $n+1>26$,
such a partition can be taken of the form
$(2i+1,2i-1)$, $(2i+1,2i-1,5)$, $(2i+1,2i-1,7,3)$, $(2i+1,2i-1,3)$,
depending on the residue of $n$~modulo~4.  This implies that $q(n+1)> q(n)$
for all $n\ge 26$.

Now, observe that $a_{n}=q(n)$ for all $n\le 2m$, which implies that
$a_{n+1} > a_n$ for all $26 \le n \le 2m-1$.  The remaining inequality
$\ts a_{2m+1} > a_{2m}\ts$ follows from $a_{2m+1} = q(2m+1)-1$, and the additional partition
$$(2i+1,2i-1,9) \ \ \text{or} \ \, (2i+1,2i-1,7) \, \in \, \cQ_{2m+1}\sm \vp(\cQ_{2m})\ts.
$$
We omit the easy details.
\end{proof}

\begin{rem}{\rm
Note that $q(25)=q(26) = 12$ \ts (see e.g.~\cite{Slo}), so for $m\ge 13$,
we have $a_{25}=a_{26} = 12$. \ts This implies that the constant~26 in
the theorem cannot be improved.  }
\end{rem}

\subsection{Proof of Theorem~\ref{t:strict}}
We follow the approach in the proof of Corollary~6.2 in~\cite{PPV}, whose
notation we adopt.  Note that for $k\le m$ we have $p_{k}(m,m)=\pi(k)$
is the number of partitions of~$k$.  Since $\pi(k)-\pi(k-1)$ is equal
to the number of partitions with no parts~1 (see e.g.~\cite{Pak}),
we have
$$
p_{1}(m,m)\, < \,p_{2}(m,m)\, < \, \ldots \, < \, p_{m}(m,m)\ts.
$$
Assume $2\le k\le n/2$. By Lemma~\ref{t:main} and Corollary~\ref{c:qbin},
we have
$$
p_{k}(m,m) - p_{k-1}(m,m)  \. = \. g(m^m,m^m,\tau_k)\ts, \quad
\text{where} \ \ \tau_k=(n-k,k)\ts, \ \, 2 \le k \le m^2/2\ts.
$$
Therefore, reversing the logic of the proof, it suffices to show that
$$
g(m^m,m^m,\tau_k) \. \ge  \. 1\ts, \quad \text{for} \ \ \, \tau_k=(n-k,k)\ts, \ \, m \le k \le m^2/2\ts.
$$
We prove this for $m\ge 27$.  By Lemma~1.3 in~\cite{PPV}, we have \ts
$g(m^m,m^m,\tau_k) \ge 1$\ts whenever the character value
$$
\chi^{\tau_k}[2m-1,\ldots,3,1] \. \ne \. 0\ts.
$$
Following the logic of the proof of Lemma~6.1 in~\cite{PPV}, this character
is equal to the difference of partitions numbers:
$$
\chi^{\tau_k}[2\ts m-1,\ldots,3,1] \, = \, a_k \. - \. a_{k-1}\ts,
$$
where $a_k$ is as in Theorem~\ref{t:prod}.  By the theorem, for~$k\ge 27$, we have
$a_k - a_{k-1}> 0$.  In summary, for $m \ge 27$ we obtain the strict unimodality
both for $k \le m$ and $k>m$, as desired.  Finally, for $7\le m \le 26$,
we check the result by a direct computation.
\ $\sq$

\bigskip

\section{Dual version}\label{s:dual}

\noindent
In this section, we apply our general approach of using
Kronecker coefficients to prove unimodality.  Here, we use
\emph{hooks} instead of two-row Young diagrams, and then apply
the results to partitions which fit the rectangle.

\smallskip

\subsection{New unimodality result} We prove the
following version of Almkvist's theorem.

\smallskip

\begin{thm}\label{t:alm}
Consider a polynomial
$$
\cB_m(q) \, = \, \bigl(1\ts + \ts q^2 \ts + \ts q^4\ts + \ts \ldots \ts +\ts q^N \bigr) \. \cA_m(q)\.,
$$
where $N=m^2-1$ if $m$ is odd, and $N=m^2$ if $m$ is even.
Then the coefficients of $\cB_m(q)$
are symmetric and unimodal.
\end{thm}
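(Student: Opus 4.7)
The plan is to mirror the approach of Section~\ref{s:main}, replacing the two-row partition $\tau=(n-k,k)$ from Lemma~\ref{t:main} with the hook $(k,\ts 1^{n-k})$, and specializing to $\la=\mu=(m^m)$ with $n=m^2$. The starting point is the Pieri identity
$$h_k\ts e_{n-k} \, = \, s_{(k,\ts 1^{n-k})} \, + \, s_{(k+1,\ts 1^{n-k-1})}\ts,$$
the hook analog of the Jacobi--Trudi expansion $s_{(n-k,k)}=s_k s_{n-k}-s_{k-1}s_{n-k+1}$ used in the proof of Lemma~\ref{t:main}. The right-hand side is now a \emph{sum} rather than a difference, so the logic must be inverted: instead of a Kronecker coefficient equaling $a_k-a_{k-1}$, the coefficient $a_k$ itself will decompose as a sum of two Kronecker coefficients on hooks.

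Applying Littlewood's identity $(\circ)$ with $\pi=(k)$ and $\theta=(1^{n-k})$, and using $s_\al * h_k=s_\al$, $s_\be * e_{n-k}=s_{\be'}$ (since $h_k$ and $e_{n-k}$ are the trivial and sign characters), I would obtain
$$s_\la \ts * \ts \bigl(h_k\ts e_{n-k}\bigr) \, = \sum_{\al\vdash k,\,\be\vdash n-k} c^{\la}_{\al\be}\ts s_\al\ts s_{\be'}\ts.$$
Setting $\la=\mu=(m^m)$ and extracting the coefficient of $s_\mu$, the rectangular LR rule from the proof of Corollary~\ref{c:qbin} forces $\be=\bar\al$ and $\be'=\bar\al$ simultaneously, each with LR coefficient~$1$. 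Hence the coefficient counts partitions $\al\vdash k$ inside $(m^m)$ satisfying $\al=\al'$; by the classical bijection sending a self-conjugate partition to its sequence of principal hook lengths (distinct odd integers at most $2m-1$), this count equals $a_k:=[q^k]\ts\cA_m(q)$. Combining with the Pieri identity yields
$$a_k \, = \, g^{(m)}_k \, + \, g^{(m)}_{k+1}\ts, \qquad g^{(m)}_k \, := \, g\bigl((m^m),\ts(m^m),\ts(k,1^{m^2-k})\bigr) \, \ge \, 0\ts.$$

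Since $N$ is even in both parity cases, the coefficient $B_n:=[q^n]\ts\cB_m(q)$ equals $\sum_{0\le 2j\le\min(N,\ts n)} a_{n-2j}$. Substituting $a_k=g^{(m)}_k+g^{(m)}_{k+1}$, the two subsums of opposite parity merge into a single window of consecutive $g^{(m)}$-values:
$$B_n \, = \sum_{k\ts=\ts\max(0,\,n-N)}^{n+1} g^{(m)}_k\ts,$$
with $g^{(m)}_k=0$ outside $1\le k\le m^2$. Differencing gives $B_n-B_{n-1}=g^{(m)}_{n+1}\ge 0$ for $n\le N+1$, and $B_n-B_{n-1}=g^{(m)}_{n+1}-g^{(m)}_{n-N-1}$ for $n\ge N+2$. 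The standard Kronecker symmetry $g(\la,\mu,\nu)=g(\la',\mu,\nu')$, specialized to $(m^m)'=(m^m)$ and $(k,1^{m^2-k})'=(m^2-k+1,\ts 1^{k-1})$, gives $g^{(m)}_k=g^{(m)}_{m^2+1-k}$. In particular $g^{(m)}_{n+1}=0$ whenever $n+1>m^2$, which is the case for $n\ge N+2$; hence $B_n-B_{n-1}\le 0$ there. This establishes unimodality, and symmetry of $(B_n)$ is immediate from the symmetry of both factors of $\cB_m(q)$.

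The principal technical step is the identification of the coefficient of $s_\mu$ as $a_k$: it requires combining the rectangular LR constraint with its conjugate (forcing $\al=\al'$) and then recognizing the count of self-conjugate partitions in $(m^m)$ via the hook bijection as the generating function $\cA_m(q)$. The window formula for $B_n$ and the subsequent sign analysis are routine bookkeeping once the decomposition $a_k=g^{(m)}_k+g^{(m)}_{k+1}$ is in place.
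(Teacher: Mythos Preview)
Your argument is correct and uses the same ingredients as the paper's proof: Littlewood's identity applied to a hook, the rectangular LR constraint forcing $\al=\al'$ when $\la=\mu=(m^m)$, and the classical bijection between self-conjugate partitions in $(m^m)$ and partitions into distinct odd parts $\le 2m-1$.

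The organization differs in one respect worth noting. The paper applies Littlewood with $\pi=(1^k)$, $\theta=(n-k)$ to obtain the coefficient $b_k$, and then \emph{inverts} via the alternating expansion
$$s_{(n-k,1^k)} \,=\, \sum_{r=0}^k (-1)^r e_{k-r}\ts h_{n-k+r}\ts,$$
which yields $g\bigl(\la,\mu,(n-k,1^k)\bigr)=B_k-B_{k-1}$ with $B_k=\sum_i b_{k-2i}$. You instead use Pieri directly,
$$h_k\ts e_{n-k} \,=\, s_{(k,1^{n-k})}+s_{(k+1,1^{n-k-1})}\ts,$$
to obtain the forward identity $a_k=g^{(m)}_k+g^{(m)}_{k+1}$, and then sum to get $B_n=\sum_{k=n-N}^{n+1} g^{(m)}_k$. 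The two are equivalent by partial summation (your $g^{(m)}_{k+1}$ is the paper's $g\bigl((m^m),(m^m),(n-k,1^k)\bigr)$ via the conjugation symmetry you invoke), but your route avoids the alternating sum entirely and makes the sliding-window structure of $B_n$ explicit. This also lets you handle the second half $n\ge N+2$ uniformly by the window formula, whereas the paper reconstructs those coefficients from the symmetry of $\cB_m(q)$.
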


\smallskip

\subsection{Dual version of the Main Lemma}
%
For partitions $\la, \mu \vdash n$ let
$$
b_k(\lambda,\mu)\. = \. \sum_{ \.\alpha \vdash k, \. \beta \vdash n-k}
c^{\lambda}_{\alpha\beta}\ts c^{\mu}_{\alpha'\beta} \qquad \text{and} \quad \
B_k(\lambda,\mu) \. = \. \sum_{i=0}^{\lfloor k/2 \rfloor} \. b_{k-2i}(\lambda,\mu)\ts.
$$

\smallskip

\begin{lemma}\label{hooks}
For any two partitions $\lambda,\mu\vdash n$ the sequence
$$
B_0(\lambda,\mu) \ts, \, B_1(\lambda,\mu)\ts , \, \ldots \, , \, B_n(\lambda,\mu)
$$
is weakly increasing.
\end{lemma}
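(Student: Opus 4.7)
The plan is to mimic the proof of the Main Lemma (Lemma~\ref{t:main}), replacing the two-row partition $(n-k,k)$ by the hook $\tau_k=(n-k,1^k)$ and reading the inequality $B_k\ge B_{k-1}$ directly off the nonnegativity of $g(\lambda,\mu,\tau_k)$.

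First, I would record the hook analogue of the Jacobi--Trudi formula,
$$
s_{(n-k,\,1^k)} \,=\, \sum_{j=0}^{k}\,(-1)^j\,h_{n-k+j}\,e_{k-j}\ts,
$$
which follows at once by iterating the vertical-strip Pieri identity $h_a e_b=s_{(a,1^b)}+s_{(a+1,1^{b-1})}$. I would then apply Littlewood's identity $(\circ)$ from the proof of Lemma~\ref{t:main} to each product $h_{n-k+j}e_{k-j}=s_{(n-k+j)}\ts s_{(1^{k-j})}$, this time with $\pi=(1^{k-j})$ and $\theta=(n-k+j)$. The inner Kronecker factors that appear are $s_\alpha*s_{(1^{k-j})}=s_{\alpha'}$ (the sign-representation identity) and $s_\beta*s_{(n-k+j)}=s_\beta$ (the trivial-representation identity), giving
$$
s_\lambda*\bigl(h_{n-k+j}\ts e_{k-j}\bigr)\,=\,\sum_{\alpha\vdash k-j,\ \beta\vdash n-k+j}\,c^{\lambda}_{\alpha\beta}\,s_{\alpha'}\,s_\beta\ts.
$$

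Extracting the coefficient of $s_\mu$ on both sides and summing over $j$ with the signs from the hook Jacobi--Trudi expansion yields
$$
g(\lambda,\mu,\tau_k)\,=\,\sum_{j=0}^{k}\,(-1)^j\,b_{k-j}(\lambda,\mu)\ts,
$$
and an elementary bookkeeping check shows that this alternating sum is precisely $B_k(\lambda,\mu)-B_{k-1}(\lambda,\mu)$: the even-indexed terms $b_k, b_{k-2},\ldots$ assemble into $B_k$, while the odd-indexed terms $b_{k-1}, b_{k-3},\ldots$ assemble into $B_{k-1}$. Nonnegativity of Kronecker coefficients therefore gives $B_k(\lambda,\mu)\ge B_{k-1}(\lambda,\mu)$ for all $k$, which is the claim.

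The only step requiring a bit of care is the sign bookkeeping in the application of Littlewood's identity, specifically making sure that $\alpha$ (and not $\beta$) gets conjugated so that $c^{\mu}_{\alpha'\beta}$ — matching the definition of $b_k(\lambda,\mu)$ — appears in the final expression. Apart from this, every ingredient (the hook Jacobi--Trudi identity, Littlewood's identity, the trivial/sign-representation identities, and the telescoping into $B_k-B_{k-1}$) is either standard or already used in the proof of Lemma~\ref{t:main}, so no genuine obstacle is expected.
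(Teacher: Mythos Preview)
Your proposal is correct and is essentially the same argument as the paper's own proof: the paper likewise applies Littlewood's identity with $\pi=(1^k)$, $\theta=(n-k)$, uses the hook Jacobi--Trudi expansion $s_{(n-k,1^k)}=\sum_{j=0}^k(-1)^j e_{k-j}h_{n-k+j}$, and reads off $g(\lambda,\mu,(n-k,1^k))=B_k(\lambda,\mu)-B_{k-1}(\lambda,\mu)\ge 0$. The only difference is cosmetic ordering---the paper first computes $s_\lambda*(e_kh_{n-k})=\sum_\nu b_k(\lambda,\nu)s_\nu$ and then inserts the alternating hook formula, whereas you expand the hook first and apply Littlewood term by term.
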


\begin{proof}
We use again Littlewood's identity~$(\circ)$
from the proof of the Main Lemma, and apply it with $\pi=(1^k)$ and
$\theta = (n-k)$ to obtain
$$
s_{\lambda}*(s_{1^k}s_{n-k}) \, = \,
\sum_{\alpha \vdash k, \. \beta \vdash n-k} \.
c^{\lambda}_{\alpha\ts\beta}\ts (s_{1^k}*s_{\alpha})\ts (s_{n-k}*s_{\beta})\ts.
$$
Recall that $s_{m}*s_{\pi}=s_{\pi}$ if $\pi \vdash m$, we have $s_{1^k}*s_{\pi}=s_{\pi'}$,
where $\pi'$ is the conjugate partition. So the above identity translates as
$$
s_{\lambda}*\bigl(s_{1^k}s_{n-k}\bigr) \, = \,
\sum_{\alpha\vdash k, \.\beta \vdash n-k} \. c^{\lambda}_{\alpha\beta}
\ts s_{\alpha'}\ts s_{\beta} \, = \,
\sum_{\nu\vdash n, \. \alpha \vdash k, \. \beta \vdash n-k} \.
c^{\lambda}_{\alpha\beta}\ts c^{\nu}_{\alpha'\beta} \ts s_{\nu}
\, = \, \sum_{\nu \vdash n} \. b_k(\lambda,\nu)\ts s_{\nu}\ts.
$$
By \emph{Pieri's rule}, we have
$$s_{1^k}\ts s_{n-k}\, =\, e_k\ts h_{n-k} \, = \, s_{(n-k,1^k)} \. + \. s_{(n-k+1,1^{k-1})}\ts.
$$
Using induction on $k$, we can express the Schur function for a hook as an alternating sum:
$$
s_{(n-k,1^{k})} \, = \, e_k\ts h_{n-k} \. - \. e_{k-1}\ts h_{n-k+1} \.+\. e_{k-2}\ts h_{n-k+2}
\. - \, \ldots \, +\. (-1)^k e_0\ts h_n\ts.
$$
Thus, we have
$$
s_{\lambda}*s_{(n-k,1^k)} \, = \,
\sum_{\nu\vdash n} \, \sum_{r=0}^k \. (-1)^{r} \ts b_{k-r}(\lambda,\nu)\ts s_{\nu} \, =
\, \sum_{\nu \vdash n} \, \bigl(B_k(\lambda,\nu) \. - \. B_{k-1}(\lambda,\nu)\bigr)\ts s_{\nu}\ts.
$$
We conclude
$$
B_k(\lambda,\mu)\. - \. B_{k-1}(\lambda,\mu) \. =
\. g\bigl(\lambda,\mu,(n-k,1^k)\bigr) \. \ge \. 0\ts,
$$
as desired. \end{proof}

\smallskip

\subsection{Proof of Theorem~\ref{t:alm}}  We start with the following
combinatorial result which follows from Lemma~\ref{hooks}.

\begin{cor}
Let $w_n(m)$ be the number of self-conjugate partitions of size $(n-2i)$,
for some $i$, which fit in the $m\times m$ square. Then the sequence
$$
w_0(m)\ts, \, w_1(m)\ts, \, \ldots \, , \, w_{m^2}(m)
$$
is weakly increasing.
\end{cor}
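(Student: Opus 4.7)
The plan is to apply Lemma \ref{hooks} to the pair $\lambda=\mu=(m^m)$, for which $n=|\lambda|=|\mu|=m^2$, and show that the resulting sequence $B_0,\ldots,B_{m^2}$ coincides with $w_0(m),\ldots,w_{m^2}(m)$. Weak monotonicity of $w_n(m)$ will then be immediate from Lemma \ref{hooks}.

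The key computation is to identify $b_k\bigl((m^m),(m^m)\bigr)$. Recall from the proof of Corollary~\ref{c:qbin} that, for any $\alpha\subset(m^m)$, the coefficient $c^{(m^m)}_{\alpha\beta}$ equals $1$ precisely when $\beta=\bar\alpha$ is the complement of $\alpha$ inside the $m\times m$ square, and is $0$ otherwise. Consequently the product $c^{(m^m)}_{\alpha\beta}\ts c^{(m^m)}_{\alpha'\beta}$ is $1$ exactly when $\beta=\bar\alpha$ and simultaneously $\beta=\overline{\alpha'}$, and is $0$ otherwise. A direct check from the definition shows that within the $m\times m$ square one has the identity $\overline{\alpha'}=(\bar\alpha)'$; together with $\beta=\bar\alpha$ this forces $\bar\alpha=(\bar\alpha)'$, i.e. $\bar\alpha$ is self-conjugate, which is equivalent to $\alpha$ itself being self-conjugate. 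Conversely, for every self-conjugate $\alpha\subset(m^m)$ the choice $\beta=\bar\alpha$ satisfies both conditions. Hence
$$
b_k\bigl((m^m),(m^m)\bigr) \, = \, \#\{\alpha\vdash k \ :\ \alpha=\alpha',\ \alpha\subset(m^m)\}\ts.
$$

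Summing, we obtain
$$
B_k\bigl((m^m),(m^m)\bigr) \, = \, \sum_{i=0}^{\lfloor k/2\rfloor} b_{k-2i}\bigl((m^m),(m^m)\bigr) \, = \, w_k(m)\ts,
$$
by the definition of $w_k(m)$ in the statement of the corollary. Lemma~\ref{hooks} then gives $B_0\le B_1\le\cdots\le B_{m^2}$, which is the desired weak monotonicity.

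No step presents a real obstacle: the only substantive ingredient is the identity $\overline{\alpha'}=(\bar\alpha)'$ in a square, which is elementary, together with the well-known evaluation of $c^{(m^m)}_{\alpha\beta}$ already used in Section~\ref{s:app}. Everything else is bookkeeping against the definitions of $b_k$, $B_k$, and $w_n(m)$.
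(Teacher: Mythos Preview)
Your proof is correct and follows essentially the same approach as the paper: specialize Lemma~\ref{hooks} to $\lambda=\mu=(m^m)$, use the evaluation of $c^{(m^m)}_{\alpha\beta}$ from Corollary~\ref{c:qbin} to reduce the nonvanishing of $c^{(m^m)}_{\alpha\beta}c^{(m^m)}_{\alpha'\beta}$ to the condition that $\alpha$ be self-conjugate, and identify $B_k$ with $w_k(m)$. The only cosmetic difference is that the paper deduces $\alpha=\alpha'$ directly from $\bar\alpha=\overline{\alpha'}$ (complementation being a bijection), whereas you route through the identity $\overline{\alpha'}=(\bar\alpha)'$; both arguments are equivalent.
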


\begin{proof}
We apply Lemma~\ref{hooks} with $\lambda=\mu=(m^m)$. As noted in the the
proof of Corollary~\ref{c:qbin}, the LR~coefficient $c^{(m^m)}_{\alpha\beta}=1$ if $\beta$
is the complementary partition of $\alpha$ within the $m\times m$ square,
 and 0 otherwise. In order for
 $c^{(m^m)}_{\alpha\beta}c^{(m^m)}_{\alpha'\beta} \neq 0$
 we must have that the complements of $\alpha$ and $\alpha'$
 within $m \times m$ are equal, which is equivalent to $\alpha=\alpha'$.
 Since for each self-conjugate $\alpha$ there is a unique complementary
 $\beta=\bar{\alpha}$ for which $\ts c^{(m^m)}_{\alpha\beta}\neq 0\ts$, we have
$$
\aligned
w_n(m) & = \, \sum_{i=1}^{\lfloor n/2\rfloor} \.
\sum_{\alpha \vdash n-2i, \.\alpha=\alpha', \.\alpha \subset (m^m) } 1
\, = \, \sum_{i=1}^{\lfloor n/2\rfloor} \. \sum_{\alpha\vdash n-2i} \.
c^{(m^m)}_{\alpha\bar{\alpha}} \ts c^{(m^m)}_{\alpha'\bar{\alpha}}\\
& = \, \sum_{i=1}^{\lfloor n/2\rfloor} \. \sum_{\alpha\vdash n-2i,\.
\beta\vdash m^2-n+2i }\. c^{(m^m)}_{\alpha\beta}\ts c^{(m^m)}_{\alpha'\beta}
\, = \, B_n(m^m,m^m)\ts.
\endaligned
$$
Now the result follows from Lemma~\ref{hooks}.
\end{proof}

Self-conjugate partitions of $n$ with largest part $\leq m$ are in a classical
bijection with partitions of $n$ into distinct odd parts $\le 2\ts m-1$,
(see e.g.~\cite{Pak}). Therefore, the Corollary implies
unimodality of the following polynomials:
$$
\bigl(1+q^2+q^4+\cdots+q^{m^2}\bigr) \. \prod_{r=1}^m (1+q^{2r-1}) \, =
\,\sum_{n=0}^{m^2} w_n(m) \ts q^n + \sum_{n=1}^{m^2} w_{m^2-n}(m)\ts q^{n+m^2}$$
for even~$m$, and
$$
\bigl(1+q^2+q^4+\cdots+q^{m^2-1}\bigr) \. \prod_{r=1}^m (1+q^{2r-1}) \, =
\,\sum_{n=0}^{m^2} w_n(m)\ts q^n \, + \, \sum_{n=1}^{m^2-1} w_{m^2-n}(m)\ts q^{n+m^2}
$$
for odd~$m$. This implies Theorem~\ref{t:alm}. \ $\sq$

\bigskip

\section{Final remarks} \label{s:fin}

\subsection{}  A combinatorial proof of unimodality of $q$-binomial coefficients
is given by O'Hara in~\cite{O} (see also~\cite{SZ,Zei}).  It would be
interesting to see if Theorem~\ref{t:qbin} can be proved by a direct
combinatorial argument.  Unfortunately, O'Hara's chain construction
argument does not seem to imply the theorem even in the case $r=1$
(cf.~$\S$\ref{ss:def-rect}).  Indeed, the value of $\corn(\al)$ is not
unimodal on the chains.  For example, the fourth chain on p.~50 in~\cite{O}
is
$$(2^2) \to (32) \to (42) \to (43) \to (43) \to (4^2) \to (4^21) \to (4^22) \to (4^23) \to (4^3)\ts,
$$
and the number of corners dips in the middle.\footnote{Note that in~\cite{O},
the author use subsets in place of partitions; the bijection is straightforward.}
Note also that O'Hara's construction does not give a symmetric chain decomposition of
the poset $L(\ell,m)$ of partitions which fit the $\ell\times m$ rectangle
(in other words, the difference between successive partitions is not always
a corner).  Existence of such decompositions remains an open problem
(see e.g.~\cite{Sta-Lef,Wen} and references therein).

\subsection{} The fact that strict unimodality of $q$-binomial coefficients
was open until now is perhaps a reflection on the lack of analytic proof
of Sylvester's theorem, as all known proofs are either algebraic or
combinatorial (see~\cite{Pro,Sta-unim}).
At the same time, our Theorem~\ref{t:qbin} is rather
mysterious; it would be nice to see a truly conceptual explanation of this
result.  While on the subject, we are curious if there is a $p$-reduction of
this result as discussed in~\cite{A2}.


\subsection{}\label{ss:fin-alm}
Theorem~\ref{t:alm} is somewhat weak, of course, and can be viewed
as both a variation on Almkvist's result as well as a statement
that the coefficients $a_n$ in $\cA_n(q)$ behave rather smoothly.
Given the sharp asymptotic results by Almkvist, it can be derived
by other means, as only unimodality of the first two and the
middle coefficients does not follow from unimodality of~$\cA_n(q)$.
We present it here as a partial triumph of algebraic methods,
as until now the analytic proof was the only result of this kind.

We should note here that it may be too much to expect an
algebraic proof of Almkvist's theorem, since $\cA_n(q)$ is
not fully unimodal, while $\cA_n(q)+q+q^{m^2-1}$ is not
combinatorially elegant.  This makes it very different from
\emph{Hughes theorem} on unimodality of
$$
\cH(t) \, = \, \prod_{i=1}^m \. \bigl(1+q^{i}\bigr)\,,
$$
which has both algebraic proofs~\cite{Hug,Sta-Lie} and an
analytic proof~\cite{OR}.  In fact, Almkvist's proof is modeled
on the Odlyzko--Richmond proof in~\cite{OR}. 

\subsection{}\label{ss:fin-sym}
In Theorem~\ref{t:qbin}, the symmetry
$$
p_n(\ell,m,r) \, = \, p_{\ell\ts m -n+r}(\ell,m,r)
$$
can be proved directly as follows.  Simply note that $p_n(\ell,m,r)$
is the number of pairs of partitions $(\al,\pi)$ such that
$\pi \ssu \al \ssu (m^\ell)$, $\al \vdash n$, and $\al/\pi$ consists
of~$r$ squares which are all (inner) corners of~$\al$.  They
are then outer corners of~$\pi$.  By taking complementary
partitions and reversing the order, we obtain pairs~$(\ov \pi, \ov \al)$
counting~$\.p_{\ell\ts m -n+r}(\ell,m,r)$\ts.


\subsection{} An important generalization of $q$-binomial coefficients
is given by $s_\la(1,q,\ldots,q^m)$, which are also known to be
unimodal~\cite[p.~137]{Mac} (see also~\cite{Kir,GOS}). The proof
goes back to Dynkin (see~\cite[p.~518]{Sta-unim}).   When
$\la=(\ell)$ or $(1^\ell)$, we get $q$-binomial coefficients
back again.

It would be nice
to find a common generalization of this result and Theorem~\ref{t:qbin}.   Note
that the most straightforward generalizations $a_k(\la)=\.$the number or
partitions~$\nu\vdash k$ which fit in the diagram~$[\la]$, is \emph{not} unimodal
in general~\cite{Stanton}.


\subsection{} Theorem~\ref{t:qbin} suggests the following generalization.
For $z\ge 1$, denote
$$
A_k(\ell,m,z) \, = \, \sum_{\al\in\cP_k(\ell,m)} \,
\frac{\Ga(\corn(\al)+z)}{\Ga(\corn(\al)+1)\ts\Ga(z)} \,,
$$
where $\Gamma(z)$ is the Gamma function.  We conjecture that $A_n(m,\ell,z)$
is unimodal.  Note that for $z\in \nn$, we have $A_k(m,\ell,z) = a_k(m,\ell,z-1)$
and the claim follows from the theorem.  See~\cite{SW} for a different one-parametric
generalization of Corollary~\ref{c:qbin}.


\subsection{} \label{ss:fin-comb-LR}
Although there are several natural combinatorial interpretations
of LR~coefficients $c_{\mu\nu}^\la$ (see e.g.~\cite{Mac,Sta}),
it is unlikely that Lemma~\ref{t:main} can be proved directly in full generality,
by an explicit surjection.  Indeed, this would give a combinatorial interpretation
of Kronecker coefficients of $g(\la,\mu,\nu)$ for $\nu=(n-k,k)$, an important
open problem whose solution is known only in a few special cases
(see~\cite{BO1,BO2,RW,Ros}).


\subsection{}\label{ss:ernesto}
After the paper was written, we learned that the  formulas in 
the proof of the Main Lemma have independently appeared in a draft version 
of~\cite{Val}, since then revised and updated.  The idea to apply these
formulas to the present unimodality results, however, is new.  

Most recently, Blasiak found a combinatorial interpretation
of the Kronecker coefficients $g(\la,\mu,\nu)$, where $\nu = (n-k,1^k)$
is a hook.  This immediately gives a combinatorial interpretation of
the difference $B_k(\lambda,\mu)-B_{k-1}(\lambda,\mu)$, as in 
Lemma~\ref{hooks}.  We use and extend this approach in~\cite{PP2}.

\subsection{}
There is yet another way to derive unimodality of $q$-binomial coefficients
(see Corollary~\ref{c:qbin}).
Recall that the Kronecker product is related to the notion of \emph{plethysm},
defined as a composition of two polynomial
representations
$$\phi: \GL(V) \rightarrow \GL(W) \quad \text{and} \quad \psi: \GL(W)\rightarrow \GL(U),
$$
giving a representation $\psi\phi:\GL(V) \rightarrow \GL(U)$, see e.g.~\cite[App.~2]{Sta}.
If the character of~$\phi$, denoted by~$f$, is expressed as a sum of
monomials via $f(x) = \sum_{\theta^i} x^{\theta^i}$ and the character of $\psi$ is~$g$,
then the character of $\psi\phi$ is given by the plethysm
$g[f] = g(x^{\theta^1},x^{\theta^2},\ldots)$. Since $\psi\phi$ is a representation
and thus decomposes into a direct sum of irreducible representations of~$\GL(V)$,
it follows that $g[f]$ is a nonnegative sum of Schur functions whenever $f$ and
$g$ are themselves nonnegative sums of Schur functions.\footnote{Another standard
notation for plethysm is $g\circ f$, see e.g.~\cite[$\S$1.8]{Mac}.}

In particular, this gives the following recipe for producing unimodal sequences.
Let $g=s_{(n-k,k)}$,  and let $f$ be any symmetric function that is a nonnegative
sum of Schur functions. Let $pl_n(\lambda,f,k)$ be the coefficient of
$s_{\lambda}(x)$ in the expansion of $h_{n-k}[f]\ts h_k[f]$
in terms of Schur functions, i.e.
$$
h_{n-k}[f] \cdot h_k[f] \, = \, \sum_{\lambda} \. pl_n(\lambda,f,k)\ts s_{\lambda}\ts.
$$
Observe that for $k\leq n/2$, we have $\de_k=pl_n(\lambda,f,k)-pl_n(\lambda,f,k-1)$
is equal to the coefficient of $s_{\lambda}$ in the expansion $s_{(n-k,k)}[f]$.
This implies that $\de_n\ge 0$, and thus
the sequence
$$
pl_n(\lambda,f,0)\., \. \ldots \. ,\. pl_n(\lambda,f,n)
$$
is symmetric and unimodal for any $\lambda\vdash n$.

For example, when $f=s_{(1,1)}$ and $\lambda=(m^{2\ell})$ this approach gives
Corollary~\ref{c:qbin} again.  We omit the details which are technical and
somewhat involved.

\subsection{}\label{ss:fin-post}  In~\cite{PP}, we generalize Theorem~\ref{t:strict}
to all all large enough $q$-binomial coefficients.  Namely, we prove that
$$
p_{1}(\ell,m)\. < \. \ldots \. < \. p_{\lfloor \ell m/2\rfloor}
\. = \. p_{\lceil \ell m/2\rceil} \. > \. \ldots \. > \. p_{\ell m-1}(\ell,m)\ts.
$$
for all $\ell,m\ge 8$.  We use a completely different approach, based on 
algebraic properties of Kronecker coefficients.  

Most recently, Shareshian found another proof of our 
Theorem~\ref{t:qbin}, which uses combinatorics of flags over~$\fq$ and 
 reduces the result to Sylvester's theorem.\footnote{Personal communication.} 


\subsection{} The \emph{log-concavity} is a stronger property than unimodality,
which appears in many applications. A sequence $a_1,\ldots,a_N$ is called 
log-concave if $a_n^2 \geq a_{n-1}a_{n+1}$ for all $2\leq n \leq N-1$.  
This property fails for $q$-binomial coefficients, but
does hold in several related contexts.  Let us single out~\cite{But} for
$q$-log-concavity of a sequence
$$\binom{n}{0}_q\., \, \binom{n}{1}_q \., \, \ldots \, , \, \binom{n}{n}_q
$$
viewed as polynomials, and \cite{Ok} for log-concavity properties of certain
LR~coefficients.  See~\cite{B2,Sta-unim} for the surveys.

\vskip.65cm

\noindent
{\bf Acknowledgements.} \ We are grateful to Jonah Blasiak, Stephen DeSalvo,
Christian Ikenmeyer, Rosa Orellana, John Shareshian, Dennis Stanton 
and Ernesto Vallejo for helpful conversations.
The first author was partially supported by the BSF and the NSF grants,
the second by the Simons Postdoctoral Fellowship.


\vskip1.1cm


{\footnotesize

}

\end{document}